
\documentclass{amsart}
\usepackage{amsmath,amssymb,amsthm,amscd,amsfonts}

\theoremstyle{plain}
\newtheorem{thm}{Theorem}[section]
\newtheorem{prop}[thm]{Proposition}

\newtheorem{lem}[thm]{Lemma}

\newtheorem{main}{Main Theorem}

\newtheorem{prm}{Problem}

\theoremstyle{remark}

\theoremstyle{definition}


\newcommand{\N}{\mathbb{N}}                       
\newcommand{\I}{\mathbf{I}}                       



\newcommand{\cl}{\operatorname{cl}}               



\newcommand{\card}{\operatorname{card}}	          

\newcommand{\diam}{\operatorname{diam}}           


\newcommand{\fin}{\operatorname{Fin}}             
\newcommand{\cpt}{\operatorname{Comp}}             




\title[The compact strong $Z$-set property in a hyperspace of finite subsets]{The compact strong $Z$-set property in a hyperspace of finite subsets}

\author[K.~Koshino]{Katsuhisa Koshino}

\address{Division of Mathematics, Pure and Applied Sciences, University of Tsukuba, Tsukuba, 305-8571, Japan}

\email{kakoshino@math.tsukuba.ac.jp}

\subjclass[2010]{Primary: 54B20, Secondary: 54F65, 57N20}

\keywords{hyperspace, the Vietoris topology, the Hausdorff metric, $Z$-set, strong $Z$-set}

\begin{document}

\begin{abstract}
Let $X$ be a non-degenerate, connected, locally path-connected metrizable space and $\fin(X)$ be the hyperspace consisting of non-empty finite subsets in $X$ endowed with the Vietoris topology.
In this paper, we show that every compact set in $\fin(X)$ is a strong $Z$-set.
\end{abstract}

\maketitle

\section{Introduction}

Throughout this paper, spaces are metrizable and maps are continuous.
A closed subset $A$ of a space $X$ is said to be a \textit{(strong) $Z$-set} in $X$ if for each open cover $\mathcal{U}$ of $X$,
 there exists a map $f : X \to X$ such that $f$ is $\mathcal{U}$-close to the identity map on $X$ and the (closure of) image misses $A$.
We recall that for maps $f : X \to Y$ and $g : X \to Y$, and for an open cover $\mathcal{U}$ of $Y$, $f$ is $\mathcal{U}$-close to $g$ if for each $x \in X$, there exists a member $U \in \mathcal{U}$ such that the both $f(x)$ and $g(x)$ are contained in $U$.
These notions play central roles in the theory of infinite-dimensional topology.
It is said that a space $X$ has \textit{the compact (strong) $Z$-set property} if every compact set in $X$ is a (strong) $Z$-set.
This property is very important because typical infinite-dimensional manifolds have the compact strong $Z$-set property.
Given a space $X$, let $\fin(X)$ be the hyperspace of non-empty finite subsets of $X$ endowed with the Vietoris topology.
D.~Curtis \cite[Proposition~7.3]{Cu4} proved that if $X$ is non-degenerate, connected, locally path-connected and {\bf $\sigma$-compact},\footnote{A space is $\sigma$-(locally) compact provided that it is a countable union of (locally) compact subsets.}
 then $\fin(X)$ has the compact strong $Z$-set property.
In the case that $X$ is not separable, M.~Yaguchi \cite[Proposition~6.1]{Yag} showed that if $X$ is a normed linear space of dimension $\geq 1$,
 then $\fin(X)$ has the compact strong $Z$-set property.
In this paper, we generalize these results as follows:

\begin{main}
Let $X$ be non-degenerate, connected and locally path-connected.
Then $\fin(X)$ has the compact strong $Z$-set property.
\end{main}

\section{Preliminaries}

In this section, we fix some notation and introduce some lemmas concerning nice subdivisions of simplicial complexes used in the next section.
We denote the set of natural numbers by $\N$ and the closed unit interval by $\I$.
Let $Y = (Y,\rho)$ be a metric space.
For a point $y \in Y$ and a subset $A \subset Y$, we define the distance $\rho(y,A)$ between $y$ and $A$ by $\rho(y,A) = \inf\{\rho(y,a) \mid a \in A\}$.
For $\epsilon > 0$, let $B_\rho(y,\epsilon) = \{y' \in Y \mid \rho(y,y') < \epsilon\}$, $\overline{B_\rho(y,\epsilon)} = \{y' \in Y \mid \rho(y,y') \leq \epsilon\}$, $N_\rho(A,\epsilon) = \{y' \in Y \mid \rho(y',A) < \epsilon\}$ and $\overline{N_\rho(A,\epsilon)} = \{y' \in Y \mid \rho(y',A) \leq \epsilon\}$.
The diameter of $Y$ is denoted by $\diam_\rho{Y}$.
Let $\cpt(Y) = (\cpt(Y),\rho_H)$ be the hyperspace consisting of compact subsets of $Y$ with the Hausdorff metric $\rho_H$ induced by $\rho$ defined as follows:
 $$\rho_H(A,B) = \inf\{r > 0 \mid A \subset N_\rho(B,r), B \subset N_\rho(A,r)\}.$$
Note that $\fin(Y)$ is regarded as a subspace of $\cpt(Y)$.

Given a simplicial complex $K$, we denote the polyhedron\footnote{In this paper, we do not need polyhedra to be metrizable.} of $K$ by $|K|$ and the $n$-skeleton of $K$ by $K^{(n)}$ for each $n \in \N \cup \{0\}$.
Regarding $\sigma \in K$ as a simplicial complex consisting of its faces, we write $\sigma^{(n)}$ as the set of $i$-faces of $\sigma$, $i \leq n$.
The boundary of a simplex $\sigma$ is denoted by $\partial{\sigma}$.
The next two lemmas are used in the proof of Theorem~E in \cite{Cu1}.

\begin{lem}\label{subd.1}
Let $Y = (Y,\rho)$ be a metric space, $K$ a simplicial complex and $f : |K| \to Y$ a map.
For each map $\alpha : Y \to (0,\infty)$, there exists a subdivision $K'$ of $K$ such that $\diam_\rho{f(\sigma)} < \inf_{x \in \sigma} \alpha f(x)$ for all $\sigma \in K'$.
\end{lem}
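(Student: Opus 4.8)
The plan is to build $K'$ skeleton by skeleton, subdividing each simplex $\sigma \in K$ finely enough that the continuous map $f$ varies little over each resulting subsimplex. The geometric point is that $|K|$ carries the weak (Whitehead) topology, so $f$ is automatically continuous when restricted to each (compact) simplex $\sigma$, and on that compact set both $f$ and the composite $\alpha f$ are well-behaved: $\alpha f$ attains a positive minimum $m_\sigma := \inf_{x \in \sigma} \alpha f(x) > 0$, and $f$ is uniformly continuous. So for a fixed $\sigma$ one can choose, by uniform continuity, a $\delta_\sigma > 0$ in the standard metric on $\sigma \subset \mathbb{R}^{n}$ such that points within $\delta_\sigma$ in $\sigma$ have $f$-images within, say, $m_\sigma/3$ of each other; then any subdivision of $\sigma$ with mesh $< \delta_\sigma$ has $\diam_\rho f(\tau) \le m_\sigma/3 < m_\sigma$ for every subsimplex $\tau$. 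Barycentric subdivision iterated sufficiently many times achieves arbitrarily small mesh, so such a subdivision of $\sigma$ exists.

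The one subtlety is compatibility across simplices: we must produce a \emph{single} subdivision $K'$ of the whole complex $K$, so the subdivisions chosen on the various simplices have to agree on common faces. I would handle this by an induction on skeleta: having subdivided $K^{(n-1)}$ into a complex $L_{n-1}$ with the desired diameter bound on all its simplices, extend over each $n$-simplex $\sigma$ as follows. The boundary $\partial\sigma$ is already subdivided (by $L_{n-1}$ restricted to $\partial\sigma$); cone that subdivision from the barycenter $\hat\sigma$ to get a first subdivision $\sigma_0$ of $\sigma$ rel $\partial\sigma$, then iterate a subdivision operation that keeps $\partial\sigma$ fixed --- e.g. repeated barycentric subdivision of $\sigma_0$, or a "generalized barycentric subdivision" --- until the mesh of the part over $\intr\sigma$ is below the threshold $\delta_\sigma$ needed for $\sigma$. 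Since each new subsimplex $\tau$ lies in $\sigma$, and either $\tau \subset \partial\sigma$ (already controlled, because $m_\tau \ge$ nothing helpful directly --- so instead we must be slightly careful) we re-choose $\delta_\sigma$ small enough to also beat the thresholds inherited along the boundary. Concretely: process the simplices of $K$ in order of increasing dimension, and when treating $\sigma$, set $\delta_\sigma$ smaller than $\min$ over all faces $\sigma' \le \sigma$ of the quantity needed to guarantee $\diam_\rho f(\tau) < \inf_{x\in\tau}\alpha f(x)$ for $\tau \subset \sigma'$; a subsimplex $\tau$ of the final $\sigma$-subdivision then satisfies the bound using the face $\sigma'$ of smallest dimension containing $\tau$.

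Assembling the pieces, $K' := \bigcup_{\sigma \in K} (\text{chosen subdivision of }\sigma)$ is a simplicial complex subdividing $K$ by construction, and every $\tau \in K'$ lies in some $\sigma \in K$ with $\diam_\rho f(\tau) < m_\sigma \le \inf_{x \in \tau}\alpha f(x)$ (the last inequality because $\tau \subset \sigma$ forces $\inf_{x\in\tau}\alpha f(x) \ge \inf_{x\in\sigma}\alpha f(x) = m_\sigma$). The main obstacle, and the only place requiring care, is the bookkeeping in the inductive step: ensuring the subdivision of each $n$-simplex restricts to the \emph{already fixed} subdivision of its boundary while still being refinable to arbitrarily small mesh rel that boundary. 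Once that coherence is in hand, everything else is a direct application of uniform continuity on compacta plus the fact that $\alpha f$ is bounded away from $0$ on each simplex.
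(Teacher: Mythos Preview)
Your skeletal induction is the right framework, and your use of uniform continuity of $f$ and positivity of $\inf_{x\in\sigma}\alpha f(x)$ on each compact simplex is sound. But the step where you extend over an $n$-simplex $\sigma$ rel the already-fixed boundary subdivision $L_{n-1}|_{\partial\sigma}$ has a genuine gap. You propose to ``iterate a subdivision operation that keeps $\partial\sigma$ fixed \dots\ until the mesh of the part over $\intr\sigma$ is below the threshold $\delta_\sigma$''. The problem is that any simplex $\tau$ of such a subdivision that meets $\partial\sigma$ must contain an entire face $\tau'$ of $L_{n-1}|_{\partial\sigma}$, so $\diam\tau \ge \diam\tau'$, which is a fixed positive number independent of further refinement. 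Hence you \emph{cannot} force $\diam_\rho f(\tau) < m_\sigma$ for those $\tau$ by shrinking $\delta_\sigma$; the obstruction is geometric, not a matter of threshold choice. Your suggested fix (``re-choose $\delta_\sigma$ small enough to also beat the thresholds inherited along the boundary'') does not address this, and your example ``repeated barycentric subdivision of $\sigma_0$'' does not in fact keep $\partial\sigma$ fixed.

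What is missing is a mechanism that controls the boundary-touching simplices without demanding small mesh there. The paper supplies exactly this: for each top-dimensional boundary cell $\tau$ of $L_{n-1}|_{\partial\sigma}$ one uses the \emph{slack} in the strict inequality $\diam_\rho f(\tau) < \inf_{x\in\tau}\alpha f(x)$ together with continuity of $\alpha$ to find $\delta_\tau>0$ so small that every $y \in N_\rho(f(\tau),\delta_\tau)$ still has $\alpha(y)$ larger than $\diam_\rho f(\tau) + 2\delta_\tau$. One then builds an open cover of $\sigma$ by sets $(f|_\sigma)^{-1}(N_\rho(f(\tau),\delta_\tau))$ near the boundary and small $f$-preimage balls over the interior, and invokes a relative-subdivision result (the paper cites \cite[Proposition~4.7.10]{Sa6}) to triangulate $\sigma$ rel $L_{n-1}|_{\partial\sigma}$ so that every new simplex lies in a cover element. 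A simplex $\sigma'$ containing such a $\tau$ then satisfies $f(\sigma')\subset N_\rho(f(\tau),\delta_\tau)$, whence $\diam_\rho f(\sigma') < \diam_\rho f(\tau) + 2\delta_\tau < \inf_{x\in\sigma'}\alpha f(x)$. Once you insert this boundary argument, the rest of your plan goes through.
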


\begin{proof}
By induction, we shall construct subdivisions $K_n$ of the $n$-skeleton $K^{(n)}$ for all $n \in \N \cup \{0\}$ so that $K_n \subset K_{n+1}$ and $\diam_\rho{f(\sigma)} < \inf_{x \in \sigma} \alpha f(x)$ for every $\sigma \in K_n$.
Then $K' = \bigcup_{n \in \N \cup \{0\}} K_n$ will be the desired subdivision of $K$.
Let $K_0 = K^{(0)}$.
Assume that $K_n$ has been constructed.
Take any $\sigma \in K^{(n+1)} \setminus K^{(n)}$ and let $\delta = \inf_{x \in \sigma} \alpha f(x)/2 > 0$.
By the inductive assumption, we have $\diam_\rho{f(\tau)} < \inf_{x \in \tau} \alpha f(x)$ for every $\tau \in \{\tau' \in K_n \setminus K_{n-1} \mid \tau' \subset \sigma\}$.
It follows from the continuity of $\alpha$ and the compactness of $\tau$ that there is $0 < \delta_\tau < (\inf_{x \in \tau} \alpha f(x) - \diam_\rho{f(\tau)})/3$ such that for each $y \in N_\rho(f(\tau),\delta_\tau)$, $\alpha(y) > (2\inf_{x \in \tau} \alpha f(x) + \diam_\rho{f(\tau)})/3$.
Then we have the open cover
 $$\{(f|_\sigma)^{-1}(B_\rho(y,\delta)) \setminus \partial{\sigma} \mid y \in Y\} \cup \{(f|_\sigma)^{-1}(N_\rho(f(\tau),\delta_\tau)) \mid \tau \in K_n \setminus K_{n-1} \text{ and } \tau \subset \sigma\}$$
 of $\sigma$.
By the same argument as \cite[Proposition~4.7.10]{Sa6}, we can find a triangulation $K_\sigma$ of $\sigma$ such that $\{\tau \in K_n \mid \tau \subset \sigma\} \subset K_\sigma$ and the following conditions are satisfied:
\begin{itemize}
 \item $\sigma' \subset (f|_\sigma)^{-1}(B_\rho(y,\delta))$ for some $y \in Y$ if $\sigma' \in K_\sigma$ and $\sigma' \cap \partial{\sigma} = \emptyset$;
 \item $\sigma' \subset (f|_\sigma)^{-1}(N_\rho(f(\tau),\delta_\tau))$ if $\sigma' \in K_\sigma$ and $\tau \subset \sigma'$ for some $\tau \in K_n \setminus K_{n-1}$.
\end{itemize}
When $\sigma' \in K_\sigma$ and $\sigma' \cap \partial{\sigma} = \emptyset$,
 $f(\sigma') \subset B_\rho(y,\delta)$,
 and hence
 $$\diam_\rho{f(\sigma')} < 2\delta = \inf_{x \in \sigma} \alpha f(x) \leq \inf_{x \in \sigma'} \alpha f(x).$$
When $\sigma' \in K_\sigma$ and $\tau \subset \sigma'$ for some $\tau \in K_n \subset K_{n-1}$,
 $f(\sigma') \subset N_\rho(f(\tau),\delta_\tau))$,
 so
 $$\diam_\rho{f(\sigma')} < \diam_\rho{f(\tau)} + 2\delta_\tau < (2\inf_{x \in \tau} \alpha f(x) + \diam_\rho{f(\tau)})/3 \leq \inf_{x \in \sigma'} \alpha f(x).$$
Let $K_{n+1} = \{K_\sigma \mid \sigma \in K^{(n+1)} \setminus K^{(n)}\}$.
Thus the proof is complete.
\end{proof}

\begin{lem}\label{subd.2}
For each map $\alpha : |K| \to (0,\infty)$ of the polyhedron of a simplicial complex $K$ and $\beta > 1$, there is a subdivision $K'$ of $K$ such that $\sup_{x \in \sigma} \alpha(x) < \beta\inf_{x \in \sigma} \alpha(x)$ for any $\sigma \in K'$.
\end{lem}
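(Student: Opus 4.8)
The plan is to deduce this from Lemma~\ref{subd.1} by a suitable choice of target metric space and gauge function. Observe first that, since $\beta > 1$, the desired inequality $\sup_{x \in \sigma} \alpha(x) < \beta \inf_{x \in \sigma} \alpha(x)$ is equivalent to $\sup_{x \in \sigma} \alpha(x) - \inf_{x \in \sigma} \alpha(x) < (\beta - 1) \inf_{x \in \sigma} \alpha(x)$. Because each $\sigma \in K'$ is compact and $\alpha$ is continuous, $\alpha(\sigma)$ is a compact subset of $(0,\infty)$, so its diameter in the usual metric $\rho$ of $\R$ equals $\sup_{x \in \sigma} \alpha(x) - \inf_{x \in \sigma} \alpha(x)$. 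Hence it suffices to produce a subdivision $K'$ of $K$ with $\diam_\rho \alpha(\sigma) < (\beta - 1) \inf_{x \in \sigma} \alpha(x)$ for every $\sigma \in K'$.

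To obtain such a subdivision, I would apply Lemma~\ref{subd.1} with $Y = (0,\infty)$ endowed with the metric $\rho$ inherited from $\R$, with the map $f = \alpha : |K| \to Y$, and with a gauge $\gamma : Y \to (0,\infty)$ (playing the role of $\alpha$ in Lemma~\ref{subd.1}) defined by $\gamma(t) = (\beta - 1) t$. This $\gamma$ is continuous and takes values in $(0,\infty)$ because $\beta - 1 > 0$, so Lemma~\ref{subd.1} applies and yields a subdivision $K'$ of $K$ such that $\diam_\rho \alpha(\sigma) < \inf_{x \in \sigma} \gamma(\alpha(x)) = (\beta - 1) \inf_{x \in \sigma} \alpha(x)$ for all $\sigma \in K'$. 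Combining this with the reduction above gives $\sup_{x \in \sigma} \alpha(x) < \beta \inf_{x \in \sigma} \alpha(x)$, as required.

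There is essentially no hard step here: the whole argument is a translation of the multiplicative oscillation bound into the additive (diameter) bound already handled by Lemma~\ref{subd.1}, and the only points deserving a word of care are the identification of $\diam_\rho \alpha(\sigma)$ with $\sup_{x \in \sigma}\alpha(x) - \inf_{x \in \sigma}\alpha(x)$ (valid by compactness of $\sigma$, which also disposes of the trivial case of vertices, where the diameter is $0$) and the verification that the chosen gauge $\gamma$ is positive-valued. If one preferred a self-contained argument not invoking Lemma~\ref{subd.1}, one could instead pass to $\log\alpha$ and subdivide barycentrically finitely many times using uniform continuity of $\log\alpha$ on each simplex, but routing through Lemma~\ref{subd.1} is cleaner and reuses machinery already in place.
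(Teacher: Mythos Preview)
Your proof is correct but follows a different route from the paper. The paper argues directly: for each $x\in|K|$ it chooses a neighbourhood $U(x)$ on which $|\alpha(x)-\alpha(y)|<(\beta-1)\alpha(x)/(\beta+1)$, then invokes a general subdivision theorem (Theorem~4.7.11 of \cite{Sa6}) to refine $K$ so that every simplex lies in some $U(x)$, and finishes with the short estimate $\alpha(y)<2\beta\alpha(x)/(\beta+1)<\beta\alpha(z)$ where $\alpha(z)=\inf_{\sigma}\alpha$. You instead recast the multiplicative oscillation bound as the additive diameter bound $\diam_\rho\alpha(\sigma)<(\beta-1)\inf_{\sigma}\alpha$ on $\alpha(\sigma)\subset\R$ and feed this into Lemma~\ref{subd.1} with $Y=(0,\infty)$, $f=\alpha$, and gauge $\gamma(t)=(\beta-1)t$. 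Your argument is more economical, since the subdivision work is already packaged in Lemma~\ref{subd.1}; the paper's version is marginally more self-contained, appealing only to the external subdivision theorem rather than to the preceding lemma. Either way the content is the same: subdivide finely enough that $\alpha$ varies little on each simplex.
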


\begin{proof}
For each $x \in |K|$, we can choose an open neighborhood $U(x)$ of $x$ in $|K|$ so that if $y \in U(x)$,
 then $|\alpha(x) - \alpha(y)| < (\beta - 1)\alpha(x)/(\beta + 1)$.
Then $\mathcal{U} = \{U(x) \mid x \in |K|\}$ is an open cover of $|K|$.
According to Theorem~4.7.11 of \cite{Sa6}, there is a subdivision $K'$ of $K$ that refines $\mathcal{U}$.
Take any simplex $\sigma \in K'$ and any point $y \in \sigma$.
By the compactness of $\sigma$, we can find $z \in \sigma$ such that $\alpha(z) = \inf_{z' \in \sigma} \alpha(z')$.
Since $K'$ refines $\mathcal{U}$,
 there exists a point $x \in |K|$ such that $\sigma \subset U(x)$.
Then $|\alpha(x) - \alpha(y)| < (\beta - 1)\alpha(x)/(\beta + 1)$ and $|\alpha(x) - \alpha(z)| < (\beta - 1)\alpha(x)/(\beta + 1)$.
Observe that
 $$\alpha(y) < 2\beta\alpha(x)/(\beta + 1) < \beta\alpha(z),$$
 which implies that $\sup_{z' \in \sigma} \alpha(z') < \beta\inf_{z' \in \sigma} \alpha(z')$.
Hence $K'$ is the desired subdivision.
\end{proof}

\section{The compact strong $Z$-set property of $\fin(X)$}

This section is devoted to proving the main theorem.
From now on, we use an admissible metric $d$ on a space $X$ and the Hausdorff metric $d_H$ induced by $d$ on the hyperspace $\fin(X)$.
Combining Lemmas~2.3, 3.6, and the proof of Theorem~2.4 of \cite{CN} (cf.~\cite[Proposition~3.1]{Yag}), we have the following proposition:

\begin{prop}\label{ar}
The hyperspace $\fin(X)$ is an AR if and only if $X$ is connected and locally path-connected.
\end{prop}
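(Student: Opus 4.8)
There are two implications to establish, and the converse one carries the real content.

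\emph{Necessity.} An AR is a connected ANR. If $X = A \sqcup B$ with $A$ and $B$ non-empty and clopen, then $\{F \in \fin(X) : F \subseteq A\}$ is a non-empty, proper, clopen subset of $\fin(X)$ in the Vietoris topology --- it is open because $A$ is open and closed because $A$ is closed --- contradicting connectedness; hence $X$ is connected. For local path-connectedness, note that $i : X \to \fin(X)$, $i(x) = \{x\}$, is continuous and that the sets $\{F \in \fin(X) : F \subseteq U\}$ with $U$ an open neighbourhood of $x$ form a neighbourhood basis of $\{x\}$. Since the ANR $\fin(X)$ is locally path-connected, the path-component $\mathcal V$ of $\{x\}$ in the open set $\{F : F \subseteq U\}$ is open, so $V := i^{-1}(\mathcal V)$ is an open neighbourhood of $x$ contained in $U$ any two of whose points $y, z$ are joined by a path $\gamma$ in $\fin(U)$. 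The nontrivial point is to extract from $\gamma$ an honest path in $U$ from $y$ to $z$; I would do this by following a selected point of $\gamma(t)$ continuously --- analysing the path-component of $(0,y)$ in $\{(t, p) \in \I \times U : p \in \gamma(t)\}$ and showing it reaches $(1,z)$ --- together with a shrinking argument to cope with jumps in the cardinality of $\gamma(t)$. This is essentially Lemma~2.3 of \cite{CN}.

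\emph{Sufficiency.} Assume $X$ is connected and locally path-connected; then $X$ is path-connected. The plan is to prove that $\fin(X)$ is a contractible ANR, which suffices because a contractible metrizable ANR is an AR. Contractibility draws on path-connectedness of $X$ together with the continuous, idempotent, commutative join $(F, G) \mapsto F \cup G$: moving the points of a finite set simultaneously along paths shows $\fin(X)$ is path-connected, and the join, used to adjoin auxiliary points supplied by a given sphere's own paths, kills all higher homotopy groups, so $\fin(X)$ is weakly contractible and hence, being an ANR, contractible. The ANR property is the crux, and it is local: near $F = \{x_1, \dots, x_k\}$, choosing pairwise disjoint path-connected neighbourhoods $U_i \ni x_i$ exhibits a neighbourhood of $F$ in $\fin(X)$ as homeomorphic to a product $\prod_{i=1}^k \mathcal W_i$ of neighbourhoods $\mathcal W_i$ of $\{x_i\}$ in $\fin(U_i)$ (via $G \mapsto (G \cap U_i)_i$, with the join as inverse), so one is reduced to the structure of $\fin(U)$ at a singleton for small path-connected $U \subseteq X$; there, local path-connectedness of $X$ is meant to furnish the local homotopy data --- null-homotopies of small spheres in all dimensions --- underlying the ANR property.

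I expect the main obstacle to be precisely this last local analysis, carried out rigorously in the Vietoris/Hausdorff topology, where the cardinality function is only lower semicontinuous and $\fin(X)$ is \emph{not} the direct limit of its subspaces $\fin_n(X) = \{F : |F| \le n\}$; the local models must therefore be controlled with some care. That control is exactly what Lemma~3.6 and the construction in the proof of Theorem~2.4 of \cite{CN} supply, and those are the ingredients I would lean on to complete the argument.
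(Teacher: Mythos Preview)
Your proposal is correct and aligns with the paper's treatment: the paper gives no argument of its own but simply records that the result follows by combining Lemmas~2.3, 3.6, and the proof of Theorem~2.4 of \cite{CN} (cf.\ \cite[Proposition~3.1]{Yag}), and these are exactly the ingredients you identify and invoke at the key junctures of both implications. You have added a helpful roadmap and some intuition---the product decomposition near a finite set, the weak-contractibility-plus-ANR route to AR, the selection idea for pulling a path in $\fin(U)$ down to $U$---but the substantive content, as you yourself note, is carried by those cited results.
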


\begin{lem}\label{sph.}
If $X$ is non-degenerate and connected,
 then for each $x \in X$ and $0 < \epsilon < \diam_d{X}/4$, there exists a point $y \in X$ such that $d(x,y) = \epsilon$.
\end{lem}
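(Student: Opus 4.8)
The plan is to exploit connectedness together with the intermediate-value property of the continuous function $x' \mapsto d(x,x')$. Fix $x \in X$ and $0 < \epsilon < \diam_d X / 4$. First I would observe that, since $\epsilon < \diam_d X$, there exists a point $z \in X$ with $d(x,z) > \epsilon$: otherwise every pair of points would lie within distance $2\epsilon < \diam_d X/2$ of each other, contradicting the definition of the diameter (one has to be slightly careful, as the supremum defining $\diam_d X$ need not be attained, which is exactly why the constant $4$ rather than $2$ appears in the statement; choosing two points at distance $> 2\epsilon$ and using the triangle inequality through $x$ forces one of them to be at distance $> \epsilon$ from $x$).

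Next I would consider the real-valued map $g : X \to \R$ defined by $g(x') = d(x,x')$, which is continuous. We have $g(x) = 0 < \epsilon$ and $g(z) > \epsilon$ for the point $z$ found above. Since $X$ is connected, $g(X)$ is a connected subset of $\R$, hence an interval; as it contains both $g(x) = 0$ and $g(z) > \epsilon$, it contains the whole segment $[0, g(z)]$, and in particular the value $\epsilon$. Therefore there is a point $y \in X$ with $g(y) = d(x,y) = \epsilon$, which is exactly what is required.

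The only genuinely delicate point is the first step: producing a witness point at distance strictly greater than $\epsilon$ from $x$ while only knowing the crude bound $\epsilon < \diam_d X/4$ and not that the diameter is realized. I would handle this by picking points $u,v \in X$ with $d(u,v) > \diam_d X/2 > 2\epsilon$ (possible since $\diam_d X/2 < \diam_d X = \sup\{d(u,v)\}$), and then noting $d(x,u) + d(x,v) \ge d(u,v) > 2\epsilon$, so $\max\{d(x,u),d(x,v)\} > \epsilon$; take that maximizer as $z$. After that, the argument is just connectedness plus continuity, with no further obstacles.
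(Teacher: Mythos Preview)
Your proof is correct and is essentially the same connectedness argument as the paper's: the paper argues by contradiction that if no point satisfies $d(x,y)=\epsilon$ then $B_d(x,\epsilon)$ and $X\setminus\overline{B_d}(x,\epsilon)$ form a separation of $X$, while you phrase the same idea positively via the intermediate-value property of the continuous map $y\mapsto d(x,y)$. One small aside: your parenthetical remark that the constant $4$ (rather than $2$) is forced by the diameter possibly not being attained is not accurate---both your argument and the paper's go through verbatim under the weaker hypothesis $\epsilon<\diam_d X/2$, since $2\epsilon<\diam_d X$ already guarantees points $u,v$ with $d(u,v)>2\epsilon$.
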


\begin{proof}
Suppose the contrary.
Then $X$ can be separated by disjoint non-empty open subsets $B_d(x,\epsilon)$ and $X \setminus \overline{B_d}(x,\epsilon)$,
 which contradicts to the connectedness of $X$.
Thus the proof is complete.
\end{proof} 

\begin{lem}\label{subseq.}
Suppose that $\{A_n\}_{n \in \N}$ is a sequence in $\fin(X)$ converging to $A \in \fin(X)$.
Then for each $B_n \subset A_n$, $\{B_n\}_{n \in \N}$ has a subsequence converging to some $B \subset A$.
\end{lem}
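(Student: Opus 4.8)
\emph{Proof proposal.} The plan is to exploit the finiteness of the limit $A$ to reduce the statement to a pigeonhole argument on \emph{which} points of $A$ the sets $B_n$ cluster around. Write $A = \{a_1,\dots,a_k\}$ with the $a_i$ pairwise distinct and fix $\delta > 0$ with $2\delta < \min_{i \neq j} d(a_i,a_j)$, so that the open balls $B_d(a_i,\delta)$, $1 \leq i \leq k$, are pairwise disjoint. Since $A_n \to A$ with respect to $d_H$, there is $N \in \N$ with $d_H(A_n,A) < \delta$ for all $n \geq N$. For such $n$ put $r_n := d_H(A_n,A)$ and $A_n^i := A_n \cap B_d(a_i,\delta)$. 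I claim that $A_n = \bigcup_{i=1}^k A_n^i$ is a disjoint union into non-empty pieces, with $A_n^i \subset \overline{B_d(a_i,r_n)}$. Non-emptiness holds because $A \subset \overline{N_d(A_n,r_n)} \subset N_d(A_n,\delta)$ forces each $a_i$ to be within $\delta$ of $A_n$. The refined inclusion holds because every point of $A_n$ is within $r_n$ of $A$, hence within $r_n$ of some $a_j$; if such a point is also within $\delta$ of $a_i$, then $d(a_i,a_j) < 2\delta$, so $j = i$ by the choice of $\delta$. Finally $r_n \to 0$.

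Next I would attach to each $n \geq N$ the ``pattern'' $c_n := \{\, i \leq k : B_n \cap A_n^i \neq \emptyset \,\} \subset \{1,\dots,k\}$, which is non-empty since $B_n \neq \emptyset$. As there are only finitely many subsets of $\{1,\dots,k\}$, the pigeonhole principle yields an infinite $M \subset \N$ on which $c_n$ equals a fixed non-empty set $c$. I then claim that $\{B_n\}_{n \in M}$ converges to $B := \{a_i \mid i \in c\}$, which is a non-empty subset of $A$, as required.

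To verify the claim, fix $n \in M$. On one side, $B_n = \bigcup_{i \in c}(B_n \cap A_n^i) \subset \bigcup_{i \in c} \overline{B_d(a_i,r_n)} = \overline{N_d(B,r_n)}$. On the other side, for each $i \in c$ the set $B_n \cap A_n^i$ is non-empty and contained in $\overline{B_d(a_i,r_n)}$, so $d(a_i,B_n) \leq r_n$; hence $B \subset \overline{N_d(B_n,r_n)}$. Consequently $d_H(B_n,B) \leq r_n \to 0$, which proves the claim and hence the lemma.

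I do not anticipate any real difficulty here; the argument is elementary. The one point that requires a little care is the uniform estimate in the first paragraph: once $d_H(A_n,A)$ is small compared with the pairwise distances of the points of $A$, the part of $A_n$ near a given $a_i$ is squeezed to within $d_H(A_n,A)$ of $a_i$, not merely within the fixed radius $\delta$, and it is exactly this that delivers $d_H(B_n,B) \leq d_H(A_n,A)$. I also note that one cannot shortcut the proof by invoking compactness of a hyperspace of $X$, since $X$ is not assumed $\sigma$-compact; the combinatorial argument above sidesteps that issue.
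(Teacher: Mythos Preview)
Your argument is correct, but the paper takes a different route. It observes that $\tilde{A} := A \cup \bigcup_{n} A_n$ is compact (a standard fact about convergent sequences in the Vietoris hyperspace), so that $\cpt(\tilde{A})$ is compact; then $\{B_n\}$ has a convergent subsequence with limit $B \in \cpt(\tilde{A})$, and the description of hyperspace limits as sets of limits of selections forces $B \subset A$. Your closing remark is therefore off the mark: one \emph{can} shortcut via hyperspace compactness, just not of $\cpt(X)$ but of $\cpt(\tilde{A})$, and this is precisely what the paper does. Your combinatorial pigeonhole argument, on the other hand, is more elementary and self-contained, exploits the finiteness of $A$ directly, and delivers the explicit estimate $d_H(B_n,B) \leq d_H(A_n,A)$ along the chosen subsequence. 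The paper's approach is shorter once the hyperspace lemmas are available and would work verbatim if the $A_n$ and $A$ were merely compact rather than finite.
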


\begin{proof}
According to Lemma~1.11.2.~(3)\footnote{This holds without the assumption that $X$ is separable.} of \cite{Mil3}, $\tilde{A} = A \cup \bigcup_{n \in \N} A_n$ is compact.
Hence the hyperspace $\cpt(\tilde{A}) = (\cpt(\tilde{A}),(d|_{\tilde{A} \times \tilde{A}})_H)$ is compact, see \cite[Theorem~5.12.5.~(3)]{Sa6},
 which implies that $\{B_n\}_{n \in \N}$ has a subsequence $\{B_{n_i}\}_{i \in \N}$ converging to some $B \in \cpt(\tilde{A})$.
By Lemma~1.11.2.~(2)\footnotemark[3] of \cite{Mil3}, we have
\begin{multline*}
 B = \{x \in X \mid \text{for each } i \in \N, \text{ there is } b_{n_i} \in B_{n_i} \text{ such that } \lim_{i \to \infty} b_{n_i} = x\}\\
 \subset \{x \in X \mid \text{for each } i \in \N, \text{ there is } a_{n_i} \in A_{n_i} \text{ such that } \lim_{i \to \infty} a_{n_i} = x\} = A.
\end{multline*}
Thus the proof is complete.
\end{proof}

\begin{lem}\label{arc}
Let $\alpha : \fin(X) \to (0,\infty)$ be a map.
If $X$ is locally path-connected,
 then there exsits a map $\beta : \fin(X) \to (0,\infty)$ such that for any $A \in \fin(X)$, each point $x \in \overline{N_d}(A,\beta(A))$ has an arc $\gamma : \I \to X$ from some point of $A$ to $x$ of $\diam_d{\gamma(\I)} < \alpha(A)$.
\end{lem}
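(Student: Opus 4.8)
The plan is to attach a small piece of local data to each finite set and then to glue these together into one continuous function $\beta$ by a partition of unity; the delicate point is continuity of $\beta$. First, for $a \in X$ and $\epsilon > 0$ let $P(a,\epsilon)$ be the set of points $y \in X$ that can be joined to $a$ by a path $\sigma \colon \I \to X$ (i.e. $\sigma(0)=a$, $\sigma(1)=y$) with $\diam_d\sigma(\I) < \epsilon$. Using local path-connectedness I would record three facts. (1) $P(a,\epsilon)$ contains a neighbourhood of $a$: choose a path-connected neighbourhood $V$ of $a$ with $V \subseteq B_d(a,\epsilon/3)$; then $\diam_d V < \epsilon$ and every point of $V$ is joined to $a$ inside $V$. (2) $P(a,\epsilon)$ is open: if $y \in P(a,\epsilon)$ via a path $\sigma$ with $\diam_d\sigma(\I) = r < \epsilon$, choose a path-connected neighbourhood $W$ of $y$ with $\diam_d W < \epsilon - r$ and concatenate $\sigma$ with a path in $W$ from $y$ to $y'$; since both images contain $y$, the union has diameter at most $r + \diam_d W < \epsilon$, so $y' \in P(a,\epsilon)$. (3) The same ``triangle through a common point'' estimate shows: if $b$ and $c$ are each joined to a common point $p$ by paths of image-diameter $< \epsilon/2$, then reversing one and concatenating with the other gives a path from $b$ to $c$, through $p$, of diameter $< \epsilon$.

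Next I would prove the local statement: \emph{for every $A_0 \in \fin(X)$ there are an open neighbourhood $\mathcal{W}_{A_0}$ of $A_0$ in $\fin(X)$ and a number $c_{A_0} > 0$ such that, whenever $A \in \mathcal{W}_{A_0}$ and $x \in X$ satisfy $d(x,A) \leq c_{A_0}$, there is a path from some point of $A$ to $x$ of diameter $< \alpha(A)$.} Write $A_0 = \{a_1,\dots,a_k\}$ and $\epsilon_0 = \alpha(A_0)$. By (1) and (2) choose $r > 0$ with $\overline{B_d(a_i,r)} \subseteq P(a_i,\epsilon_0/4)$ for all $i \leq k$ (a finite minimum). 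By continuity of $\alpha$ choose $\eta \in (0,r/2]$ with $\alpha(A) > \epsilon_0/2$ whenever $d_H(A,A_0) < \eta$, and put $\mathcal{W}_{A_0} = \{A \in \fin(X) \mid d_H(A,A_0) < \eta\}$, $c_{A_0} = r/2$. Given $A \in \mathcal{W}_{A_0}$ and $x$ with $d(x,A) \leq r/2$: since $A$ is finite, pick $a \in A$ with $d(x,a) = d(x,A) \leq r/2$, and since $d_H(A,A_0)<\eta$ pick $a_i \in A_0$ with $d(a,a_i) < \eta \leq r/2$. Then $d(x,a_i) < r$ and $d(a,a_i) < r$, so $x, a \in P(a_i,\epsilon_0/4)$, and (3) yields a path from $a \in A$ to $x$ of diameter $< \epsilon_0/2 < \alpha(A)$.

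To finish I would globalize. Since $\fin(X)$ is metrizable (via $d_H$), hence paracompact, there is a locally finite partition of unity $\{\phi_{A_0}\}_{A_0 \in \fin(X)}$ with $\operatorname{supp}\phi_{A_0} \subseteq \mathcal{W}_{A_0}$; put
\[
  \beta(A) = \sum_{A_0 \in \fin(X)} \phi_{A_0}(A)\,c_{A_0}.
\]
Near each point this is a finite sum of maps into $(0,\infty)$, so $\beta \colon \fin(X) \to (0,\infty)$ is a well-defined map. For the conclusion, fix $A$ and $x$ with $d(x,A) \leq \beta(A)$; among the finitely many $A_0$ with $\phi_{A_0}(A) > 0$ pick $A_0^{*}$ maximizing $c_{A_0}$. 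Then $\beta(A)$ is a convex combination of those $c_{A_0}$, so $d(x,A) \leq \beta(A) \leq c_{A_0^{*}}$, and $\phi_{A_0^{*}}(A) > 0$ forces $A \in \operatorname{supp}\phi_{A_0^{*}} \subseteq \mathcal{W}_{A_0^{*}}$; the local statement then supplies a path from some point of $A$ to $x$ of diameter $< \alpha(A)$. (If an honest arc is required, extract one from the image of this path, using that path-connected Hausdorff spaces are arcwise connected; this does not increase the diameter.)

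The main obstacle is exactly this continuity issue: the supremum of admissible radii at $A$ need not depend lower-semicontinuously on $A$, so one cannot simply take a continuous minorant of it. Recording the local data as an open cover \emph{with attached constants} and then using that a convex combination of the constants is at most the largest constant attached to a cover-member containing $A$ — which is precisely the inequality needed to invoke the local statement — is the device that circumvents this. Note that only local path-connectedness of $X$ is used (not connectedness, and not separability or local compactness: every compact set invoked is a finite set).
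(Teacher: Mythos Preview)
Your argument is correct: the local statement is cleanly set up, the concatenation estimate in~(3) is exactly what is needed, and the partition-of-unity trick with the bound $\beta(A)\le\max\{c_{A_0}:\phi_{A_0}(A)>0\}$ is a sound way to pass from local data to a global continuous $\beta$. The conversion from path to arc at the end is fine, since the image of a path in a metric space is a Peano continuum and hence arcwise connected, with the arc lying inside the image.

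The paper, however, takes precisely the route you dismiss in your last paragraph. It defines
\[
\xi(A)=\sup\bigl\{\eta>0:\exists\,\epsilon<\alpha(A)\text{ such that every }a\in A,\ x\in\overline{B_d}(a,\eta)\text{ are joined by an arc of diameter }<\epsilon\bigr\},
\]
proves directly that $\xi$ \emph{is} lower semicontinuous on $\fin(X)$, and then takes a continuous minorant $0<\beta<\xi$ by a standard insertion theorem. So your claim that ``the supremum of admissible radii need not depend lower-semicontinuously on $A$'' is too pessimistic: with the built-in slack (the quantifier $\exists\,\epsilon<\alpha(A)$ rather than $\epsilon=\alpha(A)$) it does, and the verification uses the same concatenation-through-a-nearby-point idea as your fact~(3). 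Your approach is more modular and avoids the semicontinuity computation entirely; the paper's approach is more direct and yields the additional information $\beta<\xi\le\alpha$, but requires checking lower semicontinuity by hand. Both use only local path-connectedness of $X$.
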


\begin{proof}
For each $A \in \fin(X)$, let
 $$\Xi(A) = \left\{
  \begin{array}{l|l}
  \eta > 0 &\left.
  \begin{array}{ll}
  \text{there exists } 0 < \epsilon < \alpha(A) \text{ such that for any } a \in A \text{ and}\\
  x \in \overline{B_d}(a,\eta), \text{ there is an arc from } a \text{ to } x \text{ of diameter } < \epsilon
  \end{array}
  \right.
  \end{array}\right\}$$
 and $\xi(A) = \sup\Xi(A)$.
Note that $\Xi(A) \neq \emptyset$ for all $A \in \fin(X)$.
Indeed, let $0 < \epsilon < \alpha(A)$.
Since $X$ is locally path-connected,
 and hence locally arcwise-connected \cite[Corollary~5.14.7]{Sa6},
 for each $a \in A$, there exists $\eta(a) > 0$ such that for any $x \in \overline{B_d}(a,\eta(a))$, $a$ and $x$ are connected by an arc of diameter $< \epsilon$.
Then $\eta = \min_{a \in A}\eta(a) \in \Xi(A)$.
By the definition, $\xi(A) \leq \alpha(A)$. 

We shall show that $\xi$ is lower semi-continuous.
Take any $t \in (0,\infty)$ and any $A \in \xi^{-1}((t,\infty))$.
Then we can choose $t < \eta \leq \xi(A)$ so that there is $0 < \epsilon < \alpha(A)$ such that for any $a \in A$ and any $x \in \overline{B_d}(a,\eta)$, $a$ and $x$ are connected by an arc of diameter $< \epsilon$.
Since $X$ is locally arcwise-connected,
 there exists $\delta_1 > 0$ such that any $a \in A$ and any $x \in \overline{B_d}(a,\delta_1)$ are connected by an arc of diameter $< (\alpha(A) - \epsilon)/2$.
By the continuity of $\alpha$, we can find $\delta_2 > 0$ such that for each $B \in B_{d_H}(A,\delta_2)$, $|\alpha(A) - \alpha(B)| < (\alpha(A) - \epsilon)/2$.
Let $\delta = \min\{\delta_1, \delta_2, (\eta - t)/2\}$ and $B \in B_{d_H}(A,\delta)$.
Observe that $(\alpha(A) + \epsilon)/2 < \alpha(B)$.
Fix any $b \in B$ and any $x \in \overline{B_d}(b,(\eta + t)/2)$.
Since $d_H(A,B) < \delta$,
 we can take $a \in A$ such that $d(a,b) < \delta \leq \delta_1$,
 and hence there exists an arc $\gamma_1$ from $b$ to $a$ of diameter $< (\alpha(A) - \epsilon)/2$.
On the other hand,
 $$d(a,x) \leq d(a,b) + d(b,x) < \delta + (\eta + t)/2 \leq (\eta - t)/2 + (\eta + t)/2 = \eta,$$
 which implies that there is an arc $\gamma_2$ from $a$ to $x$ of diameter $< \epsilon$.
Joining these arcs $\gamma_1$ and $\gamma_2$, we can obtain an arc from $b$ to $x$ of diameter $< (\alpha(A) - \epsilon)/2 + \epsilon = (\alpha(A) + \epsilon)/2 < \alpha(B)$.
Hence $t < (\eta + t)/2 \leq \xi(B)$,
 which means that $\xi$ is lower semi-continuous.

According to Theorem~2.7.6 of \cite{Sa6}, we can find a map $\beta : \fin(X) \to (0,\infty)$ such that $0 < \beta(A) < \xi(A)$ for all $A \in \fin(X)$,
 that is the desired map.
\end{proof}

The next lemma is useful to detect a strong $Z$-set in an ANR.

\begin{lem}[Lemma~7.2 of \cite{Cu4}]\label{str.Z}
Let $A$ be a topologically complete, closed subset of an ANR $Y$.
If $A$ is a countable union of strong $Z$-sets in $Y$,
 then it is a strong $Z$-set.
\end{lem}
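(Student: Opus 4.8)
The plan is to run the classical ``convergent sequence of maps'' argument, but with a metric chosen so that topological completeness of $A$ can be exploited. \emph{Choice of metric.} Since $A$ is topologically complete it carries a complete admissible metric, and since $A$ is closed in $Y$, the metric extension theorem supplies an admissible metric $d$ on $Y$ whose restriction to $A$ is complete; we fix this $d$. Let $\mathcal{U}$ be an open cover of $Y$; I must produce a map $f\colon Y\to Y$ that is $\mathcal{U}$-close to $\id_Y$ with $\overline{f(Y)}\cap A=\emptyset$. Write $A=\bigcup_{n\in\N}A_n$ with each $A_n$ a strong $Z$-set; only $A_n\subset A$ will be used, so no reduction on the $A_n$ is needed.

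\emph{Construction.} I would build maps $\id_Y=f_0,f_1,f_2,\dots\colon Y\to Y$ with $f_n=g_n\circ f_{n-1}$, where $g_n$ is a \emph{localized} push off $A_n$: using that $A_n$ is a strong $Z$-set in the ANR $Y$, and that in an ANR any sufficiently fine perturbation of the identity is joined to it by a correspondingly small homotopy, one obtains $g_n\colon Y\to Y$ that is $\mathcal{U}$-close to $\id_Y$, moves each point as little as desired, equals $\id_Y$ outside the neighbourhood $N_d(A_n,\eta_n)$ of $A_n$, and satisfies $\overline{g_n(Y)}\cap A_n=\emptyset$, where $\eta_n\to 0$. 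Once $f_n$ is defined, fix (by normality of $Y$) a map $\omega_n\colon Y\to[0,1]$ equal to $1$ on $\overline{f_n(Y)}$ and to $0$ on an open neighbourhood $W_n$ of $A_n$; this is possible because these two sets are disjoint and closed. By the usual inductive choice of sufficiently small error functions one then arranges:
\begin{enumerate}
\item $f_n$ is $\mathcal{U}_n$-close to $f_{n-1}$, where $\mathcal{U}=\mathcal{U}_0,\mathcal{U}_1,\dots$ is a fixed sequence of open covers with each $\mathcal{U}_{n+1}$ a star-refinement of $\mathcal{U}_n$, which forces the eventual limit to be $\mathcal{U}$-close to $\id_Y$;
\item $\sum_{k}d(f_k(x),f_{k-1}(x))<\infty$ for every $x$, so $(f_n(x))_n$ is $d$-Cauchy;
\item $\sum_{k>n}d(f_k(x),f_{k-1}(x))<\dist_d\bigl(f_n(x),\omega_n^{-1}([0,\tfrac12])\bigr)$ for all $x$ and $n$, so that $f_k(Y)\subset\omega_n^{-1}((\tfrac12,1])$ whenever $k\ge n$.
\end{enumerate}

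\emph{Passing to the limit.} Fix $x$ and set $r=\liminf_n d(f_n(x),A)$. If $r>0$, then for all large $n$ one has $d(f_n(x),A_{n+1})\ge d(f_n(x),A)\ge r/2>\eta_{n+1}$, so $f_n(x)\notin N_d(A_{n+1},\eta_{n+1})$ and hence $f_{n+1}(x)=g_{n+1}(f_n(x))=f_n(x)$; the sequence is eventually constant and its limit $f(x)$ lies in $Y$ with $f(x)\notin A$. If $r=0$, some subsequence of the $d$-Cauchy sequence $(f_n(x))_n$ approaches $A$; as $d$ is complete on $A$, that subsequence, hence the whole sequence, converges to a point $a\in A$, say $a\in A_m$; but (3) gives $f_k(x)\in\omega_m^{-1}((\tfrac12,1])\subset Y\setminus W_m$ for all $k\ge m$, which stays a fixed positive $d$-distance from $a\in W_m$, a contradiction. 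Thus $r>0$ always, $f\colon Y\to Y$ is a well-defined continuous map with $f(Y)\cap A=\emptyset$, and by (1) it is $\mathcal{U}$-close to $\id_Y$. Finally, (3) yields $f(x)=\lim_k f_k(x)\in\overline{\omega_n^{-1}((\tfrac12,1])}\subset\omega_n^{-1}([\tfrac12,1])$ for every $n$, so $\overline{f(Y)}$ is contained in the closed set $\bigcap_n\omega_n^{-1}([\tfrac12,1])$, which misses $W_n\supset A_n$ for all $n$ and hence misses $A$. Therefore $\overline{f(Y)}\cap A=\emptyset$, so $A$ is a strong $Z$-set.

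\emph{Main obstacle.} The genuinely delicate step is ``$(f_n(x))_n$ converges \emph{in} $Y$'': since $Y$ itself need not be topologically complete, the limit is kept inside $Y$ only because topological completeness of $A$ lets the metric be complete on $A$, which forces the dichotomy ``the orbit of $x$ freezes off $A$'' versus ``it runs into $A$'', the latter being ruled out by the neighbourhoods $W_n$ supplied by the strong $Z$-set property. Constructing the localized pushes $g_n$ (identity off a shrinking neighbourhood of $A_n$, with closed image off $A_n$, and as small as we like) and coordinating the error functions so that (1)--(3) hold simultaneously is routine ANR bookkeeping, but that is where the bulk of the writing lies.
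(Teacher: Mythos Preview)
The paper does not prove this lemma at all; it simply quotes it as Lemma~7.2 of Curtis~\cite{Cu4} and uses it as a black box. So there is no ``paper's own proof'' to compare against, and your outline is in fact a reconstruction of Curtis's original argument.

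Your sketch is essentially correct and hits the genuine point of the lemma: the dichotomy on $r=\liminf_n d(f_n(x),A)$, together with the completeness of $d|_A$, is exactly where topological completeness of $A$ is used, and the Urysohn functions $\omega_n$ do the job of keeping $\overline{f(Y)}$ off each $A_n$. Two places deserve one more sentence each rather than the label ``routine bookkeeping'':
\begin{itemize}
\item \emph{Continuity of $f$.} Pointwise Cauchy-ness (your~(2)) does not by itself yield a continuous limit. You should strengthen~(2) to a uniform (or locally uniform) bound such as $d(g_k,\id)<2^{-k}$, so that $f_n\to f$ uniformly; this is compatible with everything else you require of $g_k$.
\item \emph{Existence of the localized push $g_n$.} Getting $g_n=\id$ outside $N_d(A_n,\eta_n)$ \emph{and} $\overline{g_n(Y)}\cap A_n=\emptyset$ simultaneously needs the homotopy $H$ from $\id$ to a strong-$Z$-push $h$ to be small enough that the transition annulus $\{\eta_n/3\le d(\,\cdot\,,A_n)\le 2\eta_n/3\}$, where $g_n$ interpolates, has image staying a definite distance from $A_n$; this works, but it is the one place where a reader could stumble.
\end{itemize}
With those two clarifications your argument goes through.
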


We denote the cardinality of a set $A$ by $\card{A}$.
For each $k \in \N$, let $\fin^k(X) = \{A \in \fin(X) \mid \card{A} \leq k\}$.
As is easily observed, $\fin^k(X)$ is closed in $\fin(X)$.
Applying the above lemma~\ref{str.Z}, we only need to show the following proposition for proving the main theorem.

\begin{prop}
Suppose that $X$ is non-degenerate, connected and locally path-connected.
Then for each $k \in \N$, $\fin^k(X)$ is a strong $Z$-set in $\fin(X)$.
\end{prop}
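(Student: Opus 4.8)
The plan is to fix an open cover $\mathcal{U}$ of $\fin(X)$ and construct a single map $f : \fin(X) \to \fin(X)$ that is $\mathcal{U}$-close to $\id_{\fin(X)}$ and whose image has closure disjoint from $\fin^k(X)$; this is exactly what it means for $\fin^k(X)$ to be a strong $Z$-set. Since $\fin(X)$ is metrizable, first choose a map $\alpha : \fin(X) \to (0,\infty)$ so that every map $f$ with $d_H(f(A),A) < \alpha(A)$ for all $A$ is automatically $\mathcal{U}$-close to the identity, shrinking $\alpha$ so that in addition $\alpha(A) < \diam_d{X}/4$ whenever $\diam_d{X} < \infty$. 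To make the ``closure'' requirement tractable, for a map $r : \fin(X) \to (0,\infty)$ put
$$Z_r = \{A \in \fin(X) \mid \text{there are distinct } x_0,\dots,x_k \in A \text{ with } d(x_i,x_j) \geq r(A) \text{ for all } i \neq j\}.$$
Then $Z_r \cap \fin^k(X) = \emptyset$ trivially, and $Z_r$ is closed in $\fin(X)$: if $A_n \to A$ in $\fin(X)$ and each $A_n$ carries such a tuple, applying Lemma~\ref{subseq.} $k+1$ times yields a single subsequence along which each of the $k+1$ distinguished points converges to a point of $A$, and distinctness survives in the limit because $r(A_n) \to r(A) > 0$. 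Hence it suffices to produce, for \emph{some} map $r$, a map $f$ with $d_H(f(A),A) < \alpha(A)$ for all $A$ and $f(\fin(X)) \subset Z_r$: then $\overline{f(\fin(X))} \subset Z_r$ misses $\fin^k(X)$.

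To build such an $f$ I would factor the identity, up to a small deformation, through a polyhedron. Using that $\fin(X)$ is an ANR (Proposition~\ref{ar}), take a simplicial complex $K$ and maps $\varphi : \fin(X) \to |K|$, $\psi : |K| \to \fin(X)$ with $d_H(\psi\varphi(A),A) < \tfrac13\alpha(A)$ for all $A$ (for instance, let $K$ be the nerve of a sufficiently fine open cover of $\fin(X)$). Then apply Lemma~\ref{subd.2} to the composite $\alpha\psi$ with $\beta$ close to $1$, and afterwards Lemma~\ref{subd.1} to $\psi : |K| \to \fin(X)$, replacing $K$ by the resulting subdivision; this lets us assume that on each simplex $\sigma$ of $K$ the gauge $\alpha\psi$ varies by only a tiny factor and $\diam_{d_H}\psi(\sigma)$ is as small as we wish relative to $\inf_{x \in \sigma}\alpha\psi(x)$. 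These are precisely the estimates that will force the eventual modification to stay $\alpha$-close to the identity.

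The heart of the argument is to modify $\psi$ to a map $\psi' : |K| \to \fin(X)$, close to $\psi$, with $\psi'(|K|) \subset Z_r$ for an appropriate $r$. On vertices this is direct: given a vertex $v$ with $\psi(v) = \{y_1,\dots,y_m\}$, pick $\delta_v > 0$ small relative to the local scale of $\psi$ near $v$ (and with $k\delta_v < \diam_d{X}/4$), and, using Lemma~\ref{sph.} repeatedly at $y_1$, choose $g_1,\dots,g_k \in X$ with $d(y_1,g_i) = (k+1-i)\delta_v$; then $\{y_1,g_1,\dots,g_k\} \subset \psi(v)\cup\{g_1,\dots,g_k\}$ is a $(k+1)$-tuple that is pairwise $\delta_v$-separated and lies within $k\delta_v$ of $\psi(v)$, and we set $\psi'(v) = \psi(v)\cup\{g_1,\dots,g_k\}$. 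To extend over higher simplices one uses Lemma~\ref{arc}: since the vertex values $\psi(v_0),\dots,\psi(v_m)$ of a simplex $\sigma$ are mutually $d_H$-close and the points added at each vertex are obtained by displacing a single point of $\psi(v_j)$ by a controlled amount, Lemma~\ref{arc} supplies short arcs joining the corresponding points, and interpolating the finite sets along these arcs — carried out coherently over the skeleta, in the usual manner of building maps of polyhedra into $\fin(X)$ — produces a continuous $\psi'$. As one crosses $\sigma$, the distinguished $(k+1)$-tuple travels along arcs of diameter a small fraction of $\min\{\delta_{v_0},\dots,\delta_{v_m}\}$, so it never collapses; bookkeeping how much separation is guaranteed over each simplex defines a continuous $r : \fin(X) \to (0,\infty)$ with $\psi'\varphi(\fin(X)) \subset Z_r$. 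Finally $d_H(f(A),A) \leq d_H(\psi'\varphi(A),\psi\varphi(A)) + d_H(\psi\varphi(A),A) < \alpha(A)$, where the first term is controlled by the subdivision estimates (and the standard care that $\alpha$ is comparable along short moves) and the second by $\tfrac13\alpha(A)$; so $f = \psi'\varphi$ is the required map.

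The step I expect to be the main obstacle is this last modification: arranging the simplexwise interpolation so that it genuinely glues to a continuous map on all of $|K|$, so that it displaces each point by strictly less than the non-constant bound allotted to it, and — most delicately — so that the distinguished $(k+1)$-tuple remains $r$-separated for a \emph{continuous, strictly positive} $r$, which is what places the image in the closed set $Z_r$ rather than merely in the open complement $\fin(X)\setminus\fin^k(X)$ and thus upgrades ``$Z$-set'' to ``strong $Z$-set''. Keeping the competing estimates consistent — the Hausdorff diameters of $\psi$ on simplices, the near-constancy of $\alpha\psi$, the lengths of the arcs from Lemma~\ref{arc}, and the separation $\delta_v$ chosen at the vertices — is where Lemmas~\ref{subd.1}, \ref{subd.2} and~\ref{arc} all have to be invoked with care.
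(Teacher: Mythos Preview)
Your overall strategy---factor through a polyhedron via the ANR property, subdivide using Lemmas~\ref{subd.1} and~\ref{subd.2}, enlarge $\psi(v)$ at each vertex by $k+1$ well-separated points produced by Lemma~\ref{sph.}, then extend over the skeleta with the help of Lemma~\ref{arc}---is the same as the paper's, and your repackaging of the closure condition via the closed sets $Z_r$ is a legitimate variant of the paper's direct contradiction argument at the end.

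The gap is in the extension step. You assert that ``the distinguished $(k+1)$-tuple travels along arcs of diameter a small fraction of $\min\{\delta_{v_0},\dots,\delta_{v_m}\}$'', but nothing in your setup supplies such arcs. Lemma~\ref{arc} yields an arc of diameter $<\alpha'(A)$ only for points within the associated radius $\beta'(A)$ of $A$, and one always has $\beta'(A)\le\alpha'(A)/2$. So arcs of diameter a small fraction of $\delta_v$ are available only for points within an even smaller radius; yet your added points $g_1,\dots,g_k$ already sit at distance up to $k\delta_v$ from $\psi(v)$, and the points at neighbouring vertices differ further by $\diam_{d_H}\psi(\sigma)$. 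There is no consistent order of choices (choose $\delta$, then apply Lemma~\ref{arc}, then subdivide) that makes the arc diameter smaller than the separation while still reaching the points you must connect: the reach of Lemma~\ref{arc} is never larger than its output diameter.

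The paper sidesteps this by \emph{not moving the tuple at all}. On a $1$-simplex $\langle v_1,v_2\rangle$ the map $h$ is built so that at every point $y$ the set $h(y)$ contains the entire, stationary tuple $\{z(v_m,j):j=0,\dots,k\}$ for at least one endpoint $v_m$; the arcs from Lemma~\ref{arc} (of diameter $<\alpha g(\hat\sigma)/2$, not $<\delta_v$) are used only to drag in the \emph{other} vertex's tuple toward the barycenter so the two halves glue continuously. The extension to higher simplices is by a union construction through a map $r:\sigma\to\fin(\partial\sigma)$ with $r(y)=\{y\}$ on $\partial\sigma$ (Lemma~3.3 of \cite{CN}), which automatically preserves the property that some vertex tuple $\{z(v,j):j\}$ sits inside $h(y)$. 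Hence the separation at $y$ is always at least $\beta g(v)/(4(k+1))$ for some $v\in\sigma^{(0)}$, and condition~(2) of the subdivision makes this uniformly comparable to $\beta g(y)$. That is exactly what the final closure argument needs---and, incidentally, it would let your $Z_r$ formulation go through with $r$ a small constant multiple of the continuous function $\beta$.
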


\begin{proof}
Let $\mathcal{U}$ be an open cover of $\fin(X)$ and $k \in \N$.
We shall construct a map $\phi : \fin(X) \to \fin(X)$ so that $\phi$ is $\mathcal{U}$-close to the identity map on $\fin(X)$ and $\cl{\phi(\fin(X))} \cap \fin^k(X) = \emptyset$,
 where for a subset $\mathcal{A} \subset \fin(X)$, $\cl{\mathcal{A}}$ means the closure of $\mathcal{A}$ in $\fin(X)$.
Take an open cover $\mathcal{V}$ of $\fin(X)$ that is a star-refinement of $\mathcal{U}$.
Since $\fin(X)$ is an AR by Proposition~\ref{ar},
 there are a simplicial complex $K$ and maps $f : \fin(X) \to |K|$, $g : |K| \to \fin(X)$ such that $gf$ is $\mathcal{V}$-close to the identity map on $\fin(X)$, refer to \cite[Theorem~6.6.2]{Sa6}.
It remains to show that there exists a map $h : |K| \to \fin(X)$ $\mathcal{V}$-close to $g$ such that $\cl{h(|K|)} \cap \fin^k(X) = \emptyset$ because $\phi = hf$ will be the desired map.

Take a map $\alpha : \fin(X) \to (0,\min\{1,\diam_d{X}\})$ so that the family $\{B_{d_H}(A,2\alpha(A)) \mid A \in \fin(X)\}$ refines $\mathcal{V}$.
Since $X$ is locally path-connected,
 according to Lemma~\ref{arc}, there is a map $\beta : \fin(X) \to (0,\infty)$ such that for any $A \in \fin(X)$, each point $x \in \overline{N_d}(A,\beta(A))$ has an arc $\gamma : \I \to X$ from some point of $A$ to $x$ of $\diam_d{\gamma(\I)} < \alpha(A)/2$.
We may assume that $\beta(A) \leq \alpha(A)/2$ for every $A \in \fin(X)$.
Combining Lemmas~\ref{subd.1} with \ref{subd.2}, we can replace $K$ with a subdivision so that for each $\sigma \in K$,
\begin{enumerate}
 \item $\diam_{d_H}{g(\sigma)} < \inf_{y \in \sigma} \beta g(y)/2$,
 \item $\sup_{y \in \sigma} \beta g(y) < 2\inf_{y \in \sigma} \beta g(y)$,
 \item $\sup_{y \in \sigma} \alpha g(y) < 4\inf_{y \in \sigma} \alpha g(y)/3$.
\end{enumerate}

For every $v \in K^{(0)}$, fix a point $x(v) \in g(v)$.
According to Lemma~\ref{sph.}, we can find a point $z(v,j) \in X$ with $d(x(v),z(v,j)) = j\beta g(v)/(4(k+1))$ for each $j = 0, \cdots, k$.
Let $h(v) = g(v) \cup \{z(v,j) \mid j = 0, \cdots, k\}$.
Clearly, $\card{h(v)} \geq k + 1$ and $d_H(g(v),h(v)) \leq \beta g(v) \leq \alpha g(v)/2$.
Observe that for any $0 \leq i < j \leq k$,
\begin{align*}
 d(z(v,i),z(v,j)) &\geq |d(x(v),z(v,i)) - d(x(v),z(v,j))| = (j-i)\beta g(v)/(4(k+1))\\
 &\geq \beta g(v)/(4(k+1)).
\end{align*}

Next, we will extend $h$ over $|K^{(1)}|$.
Let $\sigma \in K^{(1)} \setminus K^{(0)}$, $\sigma^{(0)} = \{v_1, v_2\}$ and $\hat\sigma$ be the barycenter of $\sigma$.
Due to conditions (1) and (2), we have for any $m = 1, 2$ and $j = 0, \cdots, k$,
\begin{align*}
 d(z(v_m,j),g(\hat\sigma)) &\leq d(z(v_m,j),g(v_m)) + d_H(g(v_m),g(\hat\sigma))\\
 &\leq d(z(v_m,j),g(v_m)) + \diam_{d_H}{g(\sigma)}\\
 &< d(z(v_m,j),x(v_m)) + \inf_{y \in \sigma} \beta g(y)/2 < \beta g(v_m)/4 + \inf_{y \in \sigma} \beta g(y)/2\\
 &\leq \sup_{y \in \sigma} \beta g(y)/4 + \inf_{y \in \sigma} \beta g(y)/2 < \inf_{y \in \sigma} \beta g(y) \leq \beta g(\hat\sigma).
\end{align*}
Hence there is an arc $\gamma(\sigma,v_m,j) : \I \to X$ from some point of $g(\hat\sigma)$ to $z(v_m,j)$ of $\diam_d{\gamma(\sigma,v_m,j)(\I)} < \alpha g(\hat\sigma)/2$ by Lemma~\ref{arc}.
Define $h(\hat\sigma) = g(\hat\sigma) \cup \{z(v_m,j) \mid m = 1, 2 \text{ and } j = 0, \cdots, k\}$.
Note that $\card{h(\hat\sigma)} \geq k + 1$.
Moreover, $d_H(g(\hat\sigma),h(\hat\sigma)) \leq \beta g(\hat\sigma) \leq \alpha g(\hat\sigma)/2$.
Let $\phi(\sigma) : \I \to \fin(X)$ be a map defined by
 $$\phi(\sigma)(t) = g(\hat\sigma) \cup \{\gamma(\sigma,v_m,j)(t) \mid m = 1, 2 \text{ and } j = 0, \cdots, k\},$$
 which is a path from $g(\hat\sigma)$ to $h(\hat\sigma)$.
For each $m = 1, 2$, define a map $h : \langle v_m,\hat\sigma \rangle \to \fin(X)$ of the segment between $v_m$ and $\hat\sigma$ in $\sigma$ as follows:
 $$h((1 - t)v_m + t\hat\sigma) = \left\{
 \begin{array}{ll}
  g((1 - 2t)v_m + 2t\hat\sigma) \cup \{z(v_m,j) \mid j = 0, \cdots, k\} &\text{if } 0 \leq t \leq 1/2,\\
  \phi(\sigma)(2t - 1) \cup \{z(v_m,j) \mid j = 0, \cdots, k\} &\text{if } 1/2 \leq t \leq 1.
 \end{array}
 \right.$$
Then for every $y \in \sigma$, when $y = (1 - t)v_m + t\hat\sigma$, $0 \leq t \leq 1/2$,
\begin{align*}
 d_H(g(\hat\sigma),h(y)) &\leq \max\{d_H(g(\hat\sigma),g((1 - 2t)v_m + 2t\hat\sigma)),\\
 &\ \ \ \ \ \ \ \ \ \ \ \ \ \ \ \ \max\{d(z(v_m,j),g(\hat\sigma)) \mid j = 0, \cdots, k\}\}\\
 &\leq \max\{\diam_{d_H}{g(\sigma)},\beta g(\hat\sigma)\}\\
 &< \max\{\inf_{y' \in \sigma} \beta g(y')/2,\beta g(\hat\sigma)\} \leq \beta g(\hat\sigma) \leq \alpha g(\hat\sigma)/2,
\end{align*}
 and when $y = (1 - t)v_m + t\hat\sigma$, $1/2 \leq t \leq 1$,
\begin{align*}
 d_H(g(\hat\sigma),h(y)) &\leq \max\{d_H(g(\hat\sigma),\phi(\sigma)(2t - 1)),\max\{d(z(v_m,j),g(\hat\sigma)) \mid j = 0, \cdots, k\}\}\\
 &\leq \max\{\max\{\diam_{d_H}{\gamma(\sigma,v_n,j)(\I)} \mid n = 1, 2 \text{ and } j = 0, \cdots, k\},\beta g(\hat\sigma)\}\\
 &< \max\{\alpha g(\hat\sigma)/2,\beta g(\hat\sigma)\} = \alpha g(\hat\sigma)/2.
\end{align*}
Hence, due to condition (3), we have
\begin{align*}
 d_H(g(y),h(y)) &\leq d_H(g(y),g(\hat\sigma)) + d_H(g(\hat\sigma),h(y)) \leq \diam_{d_H}{g(\sigma)} + \alpha g(\hat\sigma)/2\\
 &< \inf_{y' \in \sigma} \beta g(y')/2 + \alpha g(\hat\sigma)/2 \leq \beta g(\hat\sigma)/2 + \alpha g(\hat\sigma)/2 \leq 3\alpha g(\hat\sigma)/4\\
 &\leq 3\sup_{y' \in \sigma} \alpha g(y')/4 < \inf_{y' \in \sigma} \alpha g(y') \leq \alpha g(y).
\end{align*}
Note that for each $y \in \sigma$, $h(y)$ contains $\{z(v_1,j) \mid j = 0, \cdots, k\}$ or $\{z(v_2,j) \mid j = 0, \cdots, k\}$,
 so $\card{h(y)} \geq k + 1$.

By induction, we shall construct a map $h : |K| \to \fin(X)$ such that for each $y \in \sigma \in K \setminus K^{(0)}$, $h(y) = \bigcup_{a \in A} h(a)$ for some $A \in \fin(|\sigma^{(1)}|)$.
Assume that $h$ extends over $|K^{(n)}|$ for some $n \in \N$ such that for every $y \in \sigma \in K^{(n)} \setminus K^{(0)}$, $h(y) = \bigcup_{a \in A} h(a)$ for some $A \in \fin(|\sigma^{(1)}|)$.
Take any $\sigma \in K^{(n + 1)} \setminus K^{(n)}$.
By Lemma~3.3 of \cite{CN}, there exists a map $r : \sigma \to \fin(\partial{\sigma})$ such that $r(y) = \{y\}$ for all $y \in \partial{\sigma}$.
The map $h|_{\partial{\sigma}}$ induces $\tilde{h} : \fin(\partial{\sigma}) \to \fin(X)$ defined by $\tilde{h}(A) = \bigcup_{a \in A} h(a)$.
Then we can obtain the composition $h_\sigma = \tilde{h}r : \sigma \to \fin(X)$.
It follows from the definition that $h_\sigma|_{\partial{\sigma}} = h|_{\partial{\sigma}}$.
Observe that for each $y \in \sigma$,
 $$h_\sigma(y) = \tilde{h}r(y) = \bigcup_{y' \in r(y)} h(y') = \bigcup_{y' \in r(y)} \bigcup_{a \in A(y')} h(a) = \bigcup_{a \in \bigcup_{y' \in r(y)} A(y')} h(a),$$
 where $h(y') = \bigcup_{a \in A(y')} h(a)$ for some $A(y') \in \fin(|\sigma^{(1)}|)$ by the inductive assumption.
Thus we can extend $h$ over $|K^{(n + 1)}|$ by $h|_\sigma = h_\sigma$ for all $\sigma \in K^{(n + 1)} \setminus K^{(n)}$.

After completing this induction, we can obtain a map $h : |K| \to \fin(X)$.
For each $\sigma \in K \setminus K^{(0)}$, each $y \in \sigma$ and each $a \in |\sigma^{(1)}|$, we get
\begin{align*}
 d_H(g(y),h(a)) &\leq d_H(g(y),g(a)) + d_H(g(a),h(a)) < \diam_{d_H}{g(\sigma)} + \alpha g(a)\\
 &< \inf_{y' \in \sigma} \beta g(y')/2 + \sup_{y' \in \sigma} \alpha g(y') \leq \inf_{y' \in \sigma} \alpha g(y')/4 + 4\inf_{y' \in \sigma} \alpha g(y')/3\\
 &= 19\inf_{y' \in \sigma} \alpha g(y')/12 < 2\alpha g(y).
\end{align*}
Therefore we have
\begin{align*}
 d_H(g(y),h(y)) &= d_H\Bigg(g(y),\bigcup_{a \in \bigcup_{y' \in r(y)} A(y')} h(a)\Bigg) \leq \max_{a \in \bigcup_{y' \in r(y)} A(y')} d_H(g(y),h(a))\\
 &< 2\alpha g(y),
\end{align*}
 which implies that $h$ is $\mathcal{V}$-close to $g$.
Remark that $\{z(v,j) \mid j = 0, \cdots, k\} \subset h(y)$ for some $v \in \sigma^{(0)}$,
 and hence $\card{h(y)} \geq k + 1$.
It follows that $h(|K|) \cap \fin^k(X) = \emptyset$.
Then we may replace $h(y)$ with $g(y) \cup h(y)$ for every $y \in |K|$,
 so we have $g(y) \subset h(y)$.
The rest of this proof is to show that $\cl{h(|K|)} \cap \fin^k(X) = \emptyset$.

Suppose that there exists a sequence $\{y_n\}_{n \in \N}$ of $|K|$ such that $\{h(y_n)\}_{n \in \N}$ is converges to some $A \in \fin^k(X)$.
Take the carrier $\sigma_n \in K$ of $y_n$ and choose $v_n \in \sigma_n^{(0)}$ so that $\{z(v_n,j) \mid j = 0, \cdots, k\} \subset h(y_n)$.
Since $g(y_n) \subset h(y_n)$,
 replacing $\{g(y_n)\}_{n \in \N}$ with a subsequence, we can obtain $B \subset A$ to which $\{g(y_n)\}_{n \in \N}$ converges by Lemma~\ref{subseq.}.
Then $\{\beta g(y_n)\}_{n \in \N}$ converges to $\beta(B) > 0$.
On the other hand, for every $\epsilon > 0$, there exists $n_0 \in \N$ such that if $n \geq n_0$,
 then $d_H(h(y_n),A) < \epsilon$.
Then we can choose $0 \leq i(n) < j(n) \leq k$ for each $n \geq n_0$ so that $z(v_n,i(n)), z(v_n,j(n)) \in B_d(a,\epsilon)$ for some $a \in A$ because
 $$\card{A} \leq k < k + 1 = \card\{z(v_n,j) \mid j = 0, \cdots, k\}.$$
Note that
\begin{align*}
 \beta g(y_n)/(8(k+1)) &\leq \sup_{y \in \sigma_n} \beta g(y)/(8(k+1)) < \inf_{y \in \sigma_n} \beta g(y)/(4(k+1))\\
 &\leq \beta g(v_n)/(4(k+1)) \leq d(z(v_n,i(n)), z(v_n,j(n))) < 2\epsilon,
\end{align*}
 which means that $\{\beta g(y_n)\}_{n \in \N}$ converges to $0$.
This is a contradiction.
Consequently, $\cl{h(|K|)} \cap \fin^k(X) = \emptyset$.
\end{proof}

\section{The topological type of $\fin(X)$}

In this section, we will discuss the topological type of $\fin(X)$.
Throughout this section, we assume that $\kappa$ is an infinite cardinal.
By $\ell_2^f(\kappa)$, we denote the linear subspace spanned by the canonical orthonormal basis in the Hilbert space of weight $\kappa$.
J.~Mogilski \cite{Mog} (cf.~\cite{CDM}) gave a characterization to $\ell_2^f(\aleph_0)$,
 that is extended to the uncountable case of $\kappa$ by the author \cite{Kos1} (cf.~\cite{SaY}).
Using his characterization, D.~Curtis and N.T.~Nhu \cite{CN} (cf.~\cite{Cu4}) showed the following theorem:

\begin{thm}
The hyperspace $\fin(X)$ is homeomorphic to $\ell_2^f(\aleph_0)$ if and only if $X$ is non-degenerate, connected, locally path-connected, strongly countable-dimensional\footnote{A space is said to be strongly countable-dimensional if it is written as a countable union of finite-dimensional closed subsets.} and $\sigma$-compact.
\end{thm}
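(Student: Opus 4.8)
The plan is to deduce the theorem from Mogilski's topological characterization of $\ell_2^f(\aleph_0)$ \cite{Mog}: a space $Y$ is homeomorphic to $\ell_2^f(\aleph_0)$ if and only if $Y$ is a separable AR that is $\sigma$-compact and strongly countable-dimensional, every compact subset of $Y$ is a $Z$-set, and $Y$ has the (strong) discrete approximation property --- every sequence of maps $f_i\colon\I^{n_i}\to Y$ $(i\in\N)$ can be pushed, arbitrarily closely with respect to a prescribed open cover $\mathcal U$ of $Y$, to a sequence $g_i\colon\I^{n_i}\to Y$ for which $\{g_i(\I^{n_i})\}_{i\in\N}$ is a discrete family in $Y$. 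The ``only if'' direction is straightforward: if $\fin(X)\cong\ell_2^f(\aleph_0)$, then $\fin(X)$ is a non-degenerate AR, so by Proposition~\ref{ar} $X$ is connected and locally path-connected, while $X$ is non-degenerate since otherwise $\fin(X)$ is empty or a single point; and $X\cong\fin^1(X)$ is closed in $\fin(X)$, so it inherits $\sigma$-compactness and strong countable-dimensionality.

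For the converse, assume $X$ is non-degenerate, connected, locally path-connected, strongly countable-dimensional and $\sigma$-compact, and verify the conditions of the characterization for $Y=\fin(X)$. The formal ones are easy. A $\sigma$-compact metrizable space is separable, so $X$, each $X^k$, each $\fin^k(X)$ (a continuous image of $X^k$ under $(x_1,\dots,x_k)\mapsto\{x_1,\dots,x_k\}$), and hence $\fin(X)=\bigcup_{k\in\N}\fin^k(X)$, are separable, and the same map exhibits $\fin(X)$ as $\sigma$-compact; $\fin(X)$ is an AR by Proposition~\ref{ar}. For strong countable-dimensionality, write $X=\bigcup_{m\in\N}F_m$ with $F_m$ closed in $X$ and finite-dimensional; then $\fin(X)$ is the countable union, over $k\in\N$ and over finite unions $F$ of the $F_m$, of the sets $\fin^k(F)=\{A\in\fin(X)\mid A\subset F\}\cap\fin^k(X)$, each closed in $\fin(X)$ and finite-dimensional --- indeed, for each $j\le k$ the set $\{A\mid A\subset F,\ \card A=j\}$ has finite dimension, being the base of a covering projection whose total space is an open subset of $F^j$ (the pairwise-distinct $j$-tuples, on which the symmetric group $S_j$ acts freely) --- so $\fin(X)$ is strongly countable-dimensional by the countable sum theorem. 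Finally, by the Main Theorem (or, in the present $\sigma$-compact case, by \cite[Proposition~7.3]{Cu4}) every compact subset of $\fin(X)$ is a strong $Z$-set, hence a $Z$-set.

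Thus everything reduces to the discrete approximation property, the technical heart of the proof. The approach is to perturb the given maps by the device of the Proposition in Section~3. Choosing radii $r_i>0$ with $r_i\to0$ and cardinalities $k_i\to\infty$ that grow ``transversally'' in $i$ (so that the separation scale together with the cardinality of a finite set detects the index), one uses the local path-connectedness of $X$ and Lemmas~\ref{sph.} and \ref{arc} to move $f_i$ to a nearby map $g_i$ each of whose values is a finite set containing $k_i+1$ points pairwise of distance about $r_i$, with the displacement estimated exactly as in Section~3 so that $g_i$ stays $\mathcal U$-close to $f_i$. Then the argument closing the proof of the Section~3 Proposition, applied verbatim, shows that no sequence $g_{i_j}(t_j)$ with $i_j\to\infty$ converges in $\fin(X)$, so $\{g_i(\I^{n_i})\}_{i\in\N}$ is discrete. (That there is enough room to perform the perturbations --- and, in the stronger forms of the property, to make the $g_i$ embeddings --- rests on $X$ containing an arc, hence for every $n$ a family $A_1,\dots,A_n$ of pairwise disjoint subarcs: then $(a_1,\dots,a_n)\mapsto\{a_1,\dots,a_n\}$ embeds $\prod_jA_j\cong\I^n$ as a closed subset of $\fin(X)$, so $\fin(X)$ contains closed copies of all finite-dimensional compacta.)

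The main obstacle is exactly this last step: welding the perturbations of an entire sequence of maps into a single construction that simultaneously keeps every $g_i$ continuous and $\mathcal U$-close to $f_i$, keeps the cardinalities large, and --- by the chosen scale-and-cardinality data --- forces the images to be mutually unreachable in the Hausdorff metric, hence discrete. Every individual tool is available: Lemma~\ref{sph.} produces points at prescribed small distances, Lemma~\ref{arc} joins a finite set to such points by short arcs so that it can be continuously enlarged, Lemmas~\ref{subd.1} and \ref{subd.2} handle the simplicial bookkeeping, Lemma~\ref{str.Z} and the Main Theorem handle the $Z$-set side, and the disjoint-arcs observation supplies universality; but coordinating the scale and cardinality parameters over all the maps so that the enlargements remain discrete is the delicate point, and is the substance of Curtis and Nhu's argument.
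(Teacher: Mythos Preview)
The paper does not itself prove this theorem: it is quoted as a result of Curtis and Nhu \cite{CN} (cf.~\cite{Cu4}), obtained via Mogilski's characterization \cite{Mog} of $\ell_2^f(\aleph_0)$. Your proposal follows exactly that route --- verifying Mogilski's hypotheses for $\fin(X)$, with the formal conditions (AR via Proposition~\ref{ar}, separability, $\sigma$-compactness, strong countable-dimensionality, the compact strong $Z$-set property) handled correctly and the discrete approximation / strong universality condition rightly identified as the substantive step and attributed to \cite{CN} --- so there is no divergence from the paper to report.
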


We do not know how condition on $X$ is necessary and sufficient for $\fin(X)$ to be homeomorphic to $\ell_2^f(\kappa)$ for an uncountable cardinal $\kappa$.
K.~Mine, K.~Sakai and M.~Yaguchi \cite{MSY} proved the following:

\begin{thm}
If $X$ is a connected topological manifold modeled by $\ell_2^f(\kappa)$,
 then $\fin(X)$ is homeomorphic to $\ell_2^f(\kappa)$.
\end{thm}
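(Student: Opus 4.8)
The plan is to exploit the fact that $\fin(X)$ is an AR (Proposition~\ref{ar}) in order to reduce the construction of a near-identity self-map with small image to a construction on a polyhedron, and then to build the approximating map by grafting onto each finite set a cluster of $k+1$ extra points whose pairwise distances are bounded below by a positive continuous function of the set. A pigeonhole argument in the limit will then force the closure of the image to miss $\fin^k(X)$, and an application of Lemma~\ref{str.Z} (summing over $k$) will finish the Main Theorem.

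Given an open cover $\mathcal{U}$ of $\fin(X)$, I would first pass to a star-refinement $\mathcal{V}$, and use the AR property together with \cite[Theorem~6.6.2]{Sa6} to obtain a simplicial complex $K$ and maps $f : \fin(X) \to |K|$, $g : |K| \to \fin(X)$ with $gf$ $\mathcal{V}$-close to the identity; it then suffices to produce $h : |K| \to \fin(X)$ that is $\mathcal{V}$-close to $g$ with $\cl{h(|K|)} \cap \fin^k(X) = \emptyset$, for then $\phi = hf$ is the desired map. To prepare the estimates, choose $\alpha : \fin(X) \to (0,\min\{1,\diam_d X\})$ so that the family of $d_H$-balls of radius $2\alpha$ refines $\mathcal{V}$, apply Lemma~\ref{arc} (with $\alpha/2$) to get $\beta \le \alpha/2$ governing the diameters of short arcs from a finite set to nearby points, and then replace $K$ by a subdivision (Lemmas~\ref{subd.1} and~\ref{subd.2}) on which $g$, $\beta g$ and $\alpha g$ oscillate little over each simplex.

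Next I would define $h$ skeleton by skeleton. On a vertex $v$, fix $x(v) \in g(v)$ and use Lemma~\ref{sph.} to pick $z(v,0),\dots,z(v,k) \in X$ with $d(x(v),z(v,j)) = j\beta g(v)/(4(k+1))$; these $k+1$ points are pairwise at distance at least $\beta g(v)/(4(k+1))$, so $h(v) = g(v) \cup \{z(v,j) : 0 \le j \le k\}$ has cardinality $\ge k+1$ and lies within $\beta g(v) \le \alpha g(v)/2$ of $g(v)$. Over a $1$-simplex I would route, via Lemma~\ref{arc}, short arcs from a point of $g(\hat\sigma)$ (the value at the barycenter) to each $z(v_m,j)$, move $g(\hat\sigma)$ to $h(\hat\sigma) = g(\hat\sigma) \cup \{z(v_m,j)\}$ along the resulting path of finite sets, and interpolate linearly on each half-edge, always keeping the full cluster $\{z(v_m,j): 0 \le j \le k\}$ of the relevant endpoint inside the image, so cardinality $\ge k+1$ persists along the edge while the $d_H$-displacement stays below $\alpha g$. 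For the inductive step over an $(n{+}1)$-simplex $\sigma$ I would invoke the Curtis--Nhu retraction $r : \sigma \to \fin(\partial\sigma)$ with $r(y)=\{y\}$ on $\partial\sigma$ (\cite[Lemma~3.3]{CN}) and set $h|_\sigma = \tilde h \circ r$ where $\tilde h(A) = \bigcup_{a \in A} h(a)$; by induction $h(y)$ is then of the form $\bigcup_{a \in A} h(a)$ for a finite $A \subset |\sigma^{(1)}|$, which automatically keeps some vertex-cluster inside $h(y)$, and the subdivision estimates yield $d_H(g(y),h(y)) < 2\alpha g(y)$, so $h$ is $\mathcal{V}$-close to $g$ and $h(|K|) \cap \fin^k(X) = \emptyset$. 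Replacing $h(y)$ by $g(y) \cup h(y)$ costs nothing in these estimates and arranges $g(y) \subseteq h(y)$.

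The main obstacle is upgrading $h(|K|) \cap \fin^k(X) = \emptyset$ to $\cl{h(|K|)} \cap \fin^k(X) = \emptyset$, since the image need not be closed. I would argue by contradiction: suppose $h(y_n) \to A$ with $\card A \le k$, let $\sigma_n$ be the carrier of $y_n$ and $v_n \in \sigma_n^{(0)}$ a vertex whose cluster lies in $h(y_n)$. Since $g(y_n) \subseteq h(y_n)$, Lemma~\ref{subseq.} lets us pass to a subsequence with $g(y_n) \to B \subseteq A$, whence $\beta g(y_n) \to \beta(B) > 0$. On the other hand, for large $n$ the $k+1$ points $z(v_n,0),\dots,z(v_n,k)$ all lie within any prescribed $\epsilon$ of $A$, and since $k+1 > \card A$ two of them must fall in a common $\epsilon$-ball about a point of $A$; but any two are at distance $\ge \beta g(v_n)/(4(k+1))$, which by the oscillation estimates dominates a fixed positive multiple of $\beta g(y_n)$, forcing $\beta g(y_n) \to 0$ and contradicting $\beta(B) > 0$. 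This scale-coupling — making the separation of the attached cluster comparable to $\beta$ evaluated along the sequence — is the decisive point; everything else is bookkeeping to keep the clusters intact and the displacements controlled by $\mathcal{V}$.
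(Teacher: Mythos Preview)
Your proposal proves the wrong statement. Everything you wrote is a (correct, and essentially identical to the paper's) argument for Proposition~3.5 and hence the Main Theorem: that each $\fin^k(X)$ is a strong $Z$-set in $\fin(X)$, so $\fin(X)$ has the compact strong $Z$-set property whenever $X$ is non-degenerate, connected and locally path-connected. But the statement you were asked to address is the Mine--Sakai--Yaguchi theorem from Section~4: if $X$ is a connected $\ell_2^f(\kappa)$-manifold, then $\fin(X)$ is homeomorphic to $\ell_2^f(\kappa)$. That is a homeomorphism-classification result, not a $Z$-set result.

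The paper does \emph{not} prove this theorem at all; it simply cites \cite{MSY}. A proof would have to invoke the characterization of $\ell_2^f(\kappa)$ (cf.\ \cite{Mog,CDM,Kos1,SaY}) and verify all of its hypotheses for $\fin(X)$: that $\fin(X)$ is an AR of the correct weight, that it is a countable union of closed subsets each of which is, in the appropriate sense for $\kappa$, strongly finite-dimensional/locally compact, and that it has the relevant (strong) discrete approximation property. The compact strong $Z$-set property is at most one ingredient in such an argument; your cluster-grafting construction, the Curtis--Nhu retraction, and the pigeonhole limit argument say nothing about the remaining hypotheses and therefore cannot by themselves identify $\fin(X)$ up to homeomorphism with $\ell_2^f(\kappa)$.
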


\begin{prm}
Give a necessary and sufficient condition on a space $X$ for $\fin(X)$ to be homeomorphic to $\ell_2^f(\kappa)$ for an uncountable cardinal $\kappa$.
\end{prm}


\begin{thebibliography}{9}
 \bibitem{Cu1} D.W.~Curtis, \textit{Hyperspaces homeomorphic to Hilbert space}, Proc. Amer. Math. Soc. \textbf{75}, (1979), 126--130.
 \bibitem{Cu4} D.W.~Curtis, \textit{Hyperspaces of finite subsets as boundary sets}, Topology Appl. \textbf{22}, (1986), 97--107.
 \bibitem{CDM} D.W.~Curtis, T.~Dobrowolski and J.~Mogilski, \textit{Some applications of the topological characterizations of the sigma-compact spaces $\ell_2^f$ and $\Sigma$}, Trans. Amer. Math. Soc. \textbf{284} (1984), 837--846.
 \bibitem{CN} D.W.~Curtis and N.T.~Nhu, \textit{Hyperspaces of finite subsets which are homeomorphic to $\aleph_0$-dimensional linear metric spaces}, Topology Appl. \textbf{19}, (1985), 251--260.
 \bibitem{Kos1} K.~Koshino, \textit{Characterizing non-separable sigma-locally compact infinite-dimensional manifolds and its applications}, J. Math. Soc. Japan \textbf{66} (2014), 1155--1189.
 \bibitem{Mil3} J.~van Mill, The infinite-dimensional topology of function spaces, North-Holland Math. Library, \textbf{64}, North-Holland Publishing Co., Amsterdam, 2001.
 \bibitem{MSY} K.~Mine, K.~Sakai and M.~Yaguchi, \textit{Hyperspaces of finite sets in universal spaces for absolute Borel classes}, Bull. Pol. Acad. Sci. Math. \textbf{53}, (2005), 409--419.
 \bibitem{Mog} J.~Mogilski, \textit{Characterizing the topology of infinite-dimensional $\sigma$-compact manifolds}, Proc. Amer. Math. Soc. \textbf{92} (1984), 111--118.
 \bibitem{Sa6} K.~Sakai, Geometric Aspects of General Topology, Springer, SMM, Springer, Tokyo, 2013.
 \bibitem{SaY} K.~Sakai and M.~Yaguchi, \textit{Characterizing manifolds modeled on certain dense subspaces of non-separable Hilbert spaces}, Tsukuba J. Math. \textbf{27} (2003), 143--159.
 \bibitem{Yag} M.~Yaguchi, \textit{Hyperspaces of finite subsets of non-separable Hilbert spaces}, Tsukuba J. Math. \textbf{30}, (2006), 181--193.
\end{thebibliography}
\end{document}